\setlist{noitemsep,labelwidth=*,leftmargin=*,align=left}
\setlist[enumerate,1]{label=(\alph*)}
\setlist[description]{font=\normalfont,leftmargin=!}
\renewcommand{\SetProgSty}[1]{\renewcommand{\ProgSty}[1]{\textnormal{\csname#1\endcsname{##1}}\unskip}}
\newcommand{\algass}{\ensuremath{\leftarrow}}
\renewcommand{\O}[1]{\ensuremath{\mathcal{O}(#1)}}
\theoremstyle{definition}
\newtheorem{theorem}{Theorem}[section]
\newtheorem{corollary}[theorem]{Corollary}
\newtheorem{definition}[theorem]{Definition}
\newtheorem{remark}[theorem]{Remark}
\newtheorem{lemma}[theorem]{Lemma}
\newtheorem{problem}[theorem]{Problem}
\newtheorem{notation}[theorem]{Notation}
\newtheorem{assumption}[theorem]{Assumption}
\theoremstyle{definition}
\newcommand{\M}{\ensuremath{\mathcal{M}}}
\newcommand{\F}{\ensuremath{\mathcal{F}}}
\newcommand{\I}{\ensuremath{\Lambda}}
\newcommand{\Set}[1]{\left\{ #1 \right\}}
\newcommand{\R}{\ensuremath{\mathbb{R}}}
\newcommand{\N}{\ensuremath{\mathbb{N}}}
\newcommand{\inn}{\ensuremath{\operatorname{int}}}
\newcommand{\ver}{\ensuremath{\operatorname{vert}}}
\newcommand{\appri}{\mathcal{A}}
\newcommand{\oracle}{\mathsf{ORC}}
\newcommand{\alg}{\mathsf{ALG}}
\newcommand{\cell}{c}
\pgfplotsset{compat=1.18}
\journal{Journal of ...}
\begin{document}
	
\begin{frontmatter}
	
	\title{An Approximation Framework for Parametric Matroid Interdiction Problems}
	
	%% Group authors per affiliation:
	\author[a]{Nils Hausbrandt}\corref{mycorrespondingauthor}
	\cortext[mycorrespondingauthor]{Corresponding author}
	\ead{nils.hausbrandt@math.rptu.de}
	\author[a]{Levin Nemesch}
	\ead{l.nemesch@math.rptu.de}
	\author[a]{Stefan Ruzika}
	\ead{stefan.ruzika@math.rptu.de}
	
	\address[a]{Department of Mathematics, RPTU University Kaiserslautern-Landau, 67663 Kaiserslautern, Germany}
	
	\begin{abstract}
		Matroid interdiction problems are well-researched in the field of combinatorial optimization.
		In the matroid $\ell$-interdiction problem, an interdiction strategy removes a subset of cardinality $\ell$ from the matroid's ground set.
		The goal is to maximize the weight of a remaining optimal basis.
		We examine the multi-parametric generalization of this problem, where every weight is given by a linear function depending on a parameter vector.
		For every parameter value, we are interested in an optimal interdiction strategy and the weight of an optimally interdicted basis.
		We develop the first framework for lifting approximation algorithms for the non-parametric matroid $\ell$-interdiction problem to its multi-parametric variant.
		Whenever there exists a $\beta$-approximation algorithm for the non-parametric problem, we obtain an approximation algorithm for the multi-parametric problem with an approximation quality arbitrarily close to $\beta$.
		Our method yields an FPTAS for partition matroids and a $(1-\varepsilon)\frac{1}{4}$-approximation for graphic matroids.
		As part of the construction, we develop the first approximation algorithm for a conventional multi-parametric optimization problem in which the parameter vector varies in an arbitrary polytope.    
	\end{abstract}
	
	\begin{keyword}
		Matroid, Interdiction, Approximation, Multi-parametric Approximation
	\end{keyword}
	
	\end{frontmatter}

\section{Introduction}\label{sec:intro_p_l_AX}
	In this article, we focus on the interdiction of matroids -- one of the fundamental structures of mathematics that generalizes many well-known concepts, including independence in linear algebra, cf.\ \cite{whitney1935on, wilson1973introduction, welsh2010matroid, oxley2011matroid}.
	In the matroid interdiction problem, an interdiction budget is given and each element of the matroid's ground set is associated with a weight and an interdiction cost.
	The goal is to remove a subset of the elements whose interdiction costs meet the budget while maximizing the weight of a minimum weight basis.
	
	Interdiction problems offer a highly interesting field of research in combinatorial optimization and optimization research, cf.\ \cite{smith2013modern, smith2020survey}.
	The problems can be seen as a game between two players, called \emph{interdictor} and \emph{follower}.
	The interdictor tries to worsen the follower's objective function value as much as possible.
	The interdictor's actions, called interdiction strategies, are limited by interdiction costs and an interdiction budget.
	
	Particularly in the last decade, matroid interdiction problems have gained a lot of interest.
	\cite{frederickson1998algorithms} measured the perturbability of a matroid, \cite{adjiashvili2015bulk} introduced bulk robustness,
	\cite{joret2015reducing} considered the rank interdiction problem, \cite{chestnut2017interdicting} developed pseudoapproximations, \cite{wei2024supervalid} constructed supervalid inequalities, \cite{ketkov2024class} focused on a special class of partition matroids, \cite{weninger2024interdiction} provide a dynamic programming approach and \cite{hausbrandt2024parametric, hausbrandt2024theparametric, hausbrandt2025multi} extended to (multi-)parametric variants.
	Note that there is a lot of literature on the special case of minimum spanning tree interdiction, see e.g. \cite{frederickson1999increasing,bazgan2013critical,zenklusen2015approximation}, and that we focus on arbitrary matroids here.
	
	If all interdiction costs are equal to one and the budget equals a fixed natural number $\ell \in \N_{>0}$, the matroid interdiction problem can be solved exactly in polynomial time if an independence test can be performed in time polynomial in the input length, cf.\ \cite{hausbrandt2024theparametric, weninger2024interdiction}.
	However, if $\ell$ is considered as part of the input of the problem, the problem is $\mathsf{NP}$-hard - even for graphic matroids, see \cite{frederickson1999increasing}.
	For the two special cases of uniform matroids and partition matroids, it is known that this variant can be solved in polynomial time, cf.\ \cite{weninger2024interdiction}.
	For the general variant with arbitrary interdiction costs and budget, the matroid interdiction problem is $\mathsf{NP}$-hard for uniform, partition and graphic matroids, cf.\ \cite{frederickson1999increasing, weninger2024interdiction}.
	For graphic matroids, \cite{zenklusen2015approximation} constructed an approximation algorithm with constant factor guarantee, which was improved by \cite{linhares2017improved} to a $\nicefrac{1}{4}$-approximation.
	
	In this article, we consider a multi-parametric variant of the matroid interdiction problem where the weights of the elements of the matroid's ground set depend linearly on a fixed $p\in\N_{>0}$ of parameters coming from a parameter polytope $\I\subseteq\R^p$, which is defined by an intersection of half-spaces.
	The goal is to compute, for each possible parameter vector $\lambda\in\I$, an optimal interdiction strategy and the weight of the optimal interdicted minimum weight basis.
	An optimal interdiction strategy consists of a set of $\ell\in\N_{>0}$ elements whose removal increases the weight of the minimum weight basis as much as possible.
	
	Parametric optimization is an active and growing field of research since at least 1970, a comprehensive survey is provided by \cite{nemesch2025survey}.
	In a general parametric optimization problem, the objective function value of a feasible solution depends (often linearly) on a parameter vector.
	Parametric variants exist for many well-known combinatorial optimization problems, among them minimum spanning trees and more general matroid problems, cf.\ \cite{eppstein1995geometric,fernandez1996using,agarwal1998parametric}.
	
	The combination of parametric optimization and interdiction of matroids is a recent development.
	The literature so far considers the special cases where the interdiction budget equals one or a fixed natural number $\ell$, cf.\ \cite{hausbrandt2024parametric, hausbrandt2024theparametric, hausbrandt2025multi}.
	We build upon this literature and extend it by investigating the multi-parametric matroid $\ell$-interdiction problem where all interdiction costs are equal to one and the budget is given by a natural number $\ell\in\N_{>0}$ which is part of the input of the problem.
	
	To assess the complexity of the problem, it is necessary to consider intractability, approximability, and the complexity of the corresponding non-parametric problem.
	As mentioned above, the non-parametric problem, i.~e.\ the problem in which the parameter is fixed to a vector $\lambda\in\I$, is $\mathsf{NP}$-hard.
	It is unknown whether the multi-parametric matroid $\ell$-interdiction problem is intractable or not, i.~e.\, whether it requires a super-polynomial number of optimal interdiction strategies when the parameter vector varies within the parameter polytope $\I$.
	Our goal is to provide a significant step toward understanding the complexity of the problem. 
	We show that approximation algorithms for the corresponding non-parametric problem can be lifted to the multi-parametric problem.
	To achieve this, we combine structural results for matroid $\ell$-interdiction problems with a new multi-parametric approximation algorithm.
	As a result, we provide the first approximation algorithm for a (multi-)parametric interdiction problem.
	
	\medskip
	More precisely, we show that whenever there exists a polynomial time $\beta$-approximation algorithm for the non-parametric matroid $\ell$-interdiction problem ($0<\beta\leq1$), there exists a polynomial time approximation algorithm for the multi-parametric variant with approximation guarantee arbitrary close to $\beta$.
	More specifically, we assume that there exists an oracle $\oracle_\beta$ that computes, for a fixed parameter vector $\lambda\in\I$, a $\beta$-approximate interdiction strategy at $\lambda$.
	This is a subset of the matroid's ground set that contains $\ell$ elements whose removal results in a minimum weight basis with a weight that is at most a factor of $\beta$ smaller than the weight of an optimal $\ell$-interdicted minimum weight basis.
	Two prominent matroid interdiction problems that satisfy the assumption are the \emph{partition} as well as the \emph{graphic} matroid interdiction problem.
	Given the oracle $\oracle_\beta$, we develop an algorithm that computes, for a given $0<\varepsilon<1$, a $(1-\varepsilon)\beta$-approximation on the entire parameter polytope $\I$.
	Such an approximation consists of a decomposition of the parameter polytope into smaller polytopes and one interdiction strategy per polytope which is $(1-\varepsilon)\beta$-approximate for all vectors within the polytope.
	To lift the approximation guarantee $\beta$ from a fixed point $\lambda\in\I$ to a guarantee of $(1-\varepsilon)\beta$ on the entire polytope, we need to overcome two major hurdles.
	\begin{enumerate}
		\item First, we need to decompose the parameter polytope into polynomially many smaller polytopes on which the max-min structure of the multi-parametric interdiction problem is reduced to a common \footnote{This refers to a parametric problem of the form $\max\Set{c(x)+\lambda^\top d(x)\colon x\in X}$ with continuous and convex optimal value function, as considered, for example, in \cite{bazgan2022approximation}.}
		multi-parametric maximization problem.
		\item Second, we need to develop an approximation algorithm for a common multi-parametric maximization problem on a parameter polytope.
	\end{enumerate}
	Note that there already exist some approximation algorithms for common multi-parametric  maximization (minimization) problems, see~\cite{helfrich2022approximation} and~\cite{helfrich.ruzika.ea2024EfficientlyConstructing}.
	However, both algorithms assume a specific structure of the parameter set.
	It is unclear how they can be generalized to work on arbitrary parameter polytopes.
	Our algorithm closes this gap and works on arbitrary parameter polytopes, not only for matroid problems, but for a broad class of common multi-parametric maximization and minimization problems.
	Overall, our procedure yields an FPTAS for multi-parametric partition matroid $\ell$-interdiction and a $(1-\varepsilon)\frac{1}{4}$-approximation for multi-parametric minimum spanning tree $\ell$-interdiction.
	
	The remainder of this article is organized as follows.
	The preliminaries are provided in Section~\ref{sec:prelims_p_l}.
	In Section~\ref{sec:framework_p_l}, we develop an approximation framework for the multi-parametric matroid $\ell$-interdiction problem given the following two conditions.
	We assume the existence of an oracle $\oracle_\beta$ that computes a $\beta$-approximation for the non-parametric matroid $\ell$-interdiction problem at a fixed vector $\lambda\in\I$.
	Further, we assume that there exists, for any $0<\varepsilon<1$, a $(1-\varepsilon)\beta$-approximation algorithm $\alg_\varepsilon$ for a common multi-parametric maximization problem on a parameter polytope.\footnote{The algorithm $\alg_\varepsilon$ works on sub-polytopes of $\I$ on which the interdiction problem reduces to a maximization problem and, thereby, uses the oracle $\oracle_\beta$ itself as a black box for fixed vectors $\lambda\in\I$.}
	Given $\oracle_\beta$ and $\alg_\varepsilon$, we construct a $(1-\varepsilon)\beta$-approximation algorithm for the multi-parametric matroid $\ell$-interdiction problem on the entire parameter polytope $\I$.
	In Section~\ref{sec:approx_p_l}, we develop the approximation algorithm $\alg_\varepsilon$ for common multi-parametric maximization and minimization problems on an arbitrary parameter polytope.
	In Section~\ref{sec:application_p_l}, we apply our procedure to the prominent classes of partition and graphic matroids and specify the oracle $\oracle_\beta$ for these two cases.

\section{Preliminaries}\label{sec:prelims_p_l}
	In this section, we introduce the required definitions and specify the assumptions that apply throughout the article.
	
	For a singleton $\Set{s}$ and a set $\mathcal{S}$, we use the notation $\mathcal{S}-s$ or $\mathcal{S}+s$ instead of $\mathcal{S}\setminus\Set{s}$ or $\mathcal{S}\cup\Set{s}$.
	Further, we denote the relative interior of a set $\mathcal{S}$ by $\inn(\mathcal{S})$.
	
	\textbf{Matroids.}
	Let $E$ be a finite set called the ground set of the matroid and $\emptyset\neq\F\subseteq 2^E$ be a nonempty subset of the power set of $E$.
	The tuple $\M=(E,\F)$ is called \emph{matroid} if the following properties hold:
	\begin{enumerate}
		\item The empty set $\emptyset$ is contained in $\F$.
		\item If $A\in\F$ and $B\subseteq A$, then also $B\in\F$.
		\item If $A,B \in\F$ and $\vert B\vert < \vert A\vert$, then there exists an element $a\in A\setminus B$ such that $B + a\in\F$.
	\end{enumerate}
	The subsets of $E$ contained in $\F$ are called \emph{independent} and all other subsets of $E$ are called \emph{dependent}.
	An inclusion-wise maximal independent set is called \emph{basis}.
	All bases have the same cardinality, which is called the \emph{rank} $k\coloneqq rk(\M)$ of the matroid.
	We define the cardinality of $E$ by $\vert E\vert\coloneqq m$.
	
	\textbf{Multi-parametric matroids.}
	In this article, we consider multi-parametric matroids where the weights of the elements of the matroid's ground set depend linearly on a fixed number $p\geq1$ of parameters.
	To be more precise, the weight $w(e,\lambda)$ of an element $e\in E$ is given by an affine linear function
	$$w(e,\lambda)\coloneqq w(e,\lambda_1,\dotsc,\lambda_p)= a_e + \lambda_1 b_{1,e} +\dotsc+\lambda_p b_{p,e},$$ where $a_e, b_{1,e},\dotsc, b_{p,e}\in \mathbb{Q}$ are rational numbers.
	The parameter vector $\lambda=(\lambda_1,\dotsc,\lambda_p)$ is taken from a \emph{parameter polytope} $\I\subseteq\R^p$ which is defined by $q$ half-spaces.
	%For each $i=1,\dotsc,p$, the parameter $\lambda_i$ is taken from a real interval $\I_i\subseteq\mathbb{R}$, defining the \emph{(p-dimensional) parameter interval} $\I\coloneqq\I_1\times\dotsc\times\I_p$.
	The multi-parametric weight of a basis $B$ at a point $\lambda\in\I$ is given by $w(B,\lambda)\coloneqq w(B,\lambda_1,\dotsc,\lambda_p)\coloneqq\sum_{e\in B} w(e,\lambda)$.
	In the \emph{multi-parametric matroid problem}, the goal is to compute, for each possible parameter vector $\lambda\in\I$, a minimum weight basis $B_\lambda^*$, that is, a basis $B^*$ which satisfies $w(B^*,\lambda)\leq w(B,\lambda)$ for all bases $B$ of $\M$, see \cite{hausbrandt2025multi}.
	
	To determine how the sorting of the elements $e\in E$ behaves over the parameter polytope $\I$, we consider the separating hyperplanes that arise when two weights $w(e,\lambda)$ and $w(f,\lambda)$ are intersected. 
	\begin{definition}[Separating hyperplane (\cite{seipp2013adjacency})]
		For two elements $e,f\in E$ with weight functions $w(e,\lambda)$ and $w(f,\lambda)$, the \emph{separating hyperplane} $h(e,f)\subseteq\R^p$ is defined as $h(e,f)~=~\Set{\lambda\in\mathbb{R}^p\colon w(e,\lambda)=w(f,\lambda)}$. It divides $\mathbb{R}^p$ into two open half-spaces
		\begin{enumerate}
			\item[] $h^<(e,f)=\Set{\lambda\in\mathbb{R}^p\colon w(e,\lambda)<w(f,\lambda)}$ and
			\item[] $h^>(e,f)=\Set{\lambda\in\mathbb{R}^p\colon w(e,\lambda)>w(f,\lambda)}$,
		\end{enumerate}
		where the former consists of all parameter vectors for which $e$ has a smaller weight than $f$ and the latter, conversely, contains all parameter vectors for which $f$ has a smaller weight than $e$.
		The set of all separating hyperplanes is denoted by $H^=\coloneqq\Set{h(e,f)\colon e,f\in E}$.
	\end{definition}
	The separating hyperplanes in $H^=$ define a so-called arrangement of hyperplanes, see \cite{edelsbrunner1987algorithms,orlik2013arrangements}.
	\begin{definition}[Arrangement]
		The \emph{arrangement} $A(H^=)$ consists of faces $s$ with $$s=\bigcap\limits_{h(e,f)\in H^=} t(h(e,f)),$$ where $t(h(e,f))\in\Set{h(e,f),h^<(e,f),h^>(e,f)}$.
		A full-dimensional face $s$ is called a \emph{cell} of $A(H^=)$.
	\end{definition}
	Each cell of the arrangement $A(H^=)$ corresponds to a unique basis that is minimal for all parameter vectors within the cell, cf.\ \cite{hausbrandt2025multi}.
	
	\textbf{Interdicting multi-parametric matroids.}
	In this article, we are allowed to interdict $\ell\in\mathbb{N}_{>0}$ elements of a matroid with multi-parametric weights.
	The following definitions as well as the problem formulation are an extension of the concepts in \cite{hausbrandt2024theparametric} ($p=1,\ell>1$) and \cite{hausbrandt2025multi} ($p>1,\ell=1$) to the case that $p>1$ many parameters are considered and $\ell\geq1$ many elements are allowed to be interdicted.
	\begin{notation}
		In this article, we consider a matroid $\M=(E,\F)$ with multi-parametric weights $w(e,\lambda)=a_e + \lambda_1 b_{1,e} +\dotsc+\lambda_p b_{p,e}$ for $\lambda\in\I\subseteq \R^p$, where
		\begin{enumerate}
			\item[] $m$ is the cardinality of $E$,
			\item[] $k$ is the rank of $\M$,
			\item[] $\ell$ is the number of elements allowed to be interdicted,
			\item[] $p$ is the number of parameters, and
			\item[] $q$ is the number of half-spaces defining $\I$. 
		\end{enumerate}   
	\end{notation}
	We define the following restricted matroid.
	We denote the matroid $(E',\F')$ with $E'\subseteq E$ and $\F'\coloneqq\Set{F\in\F\colon\, F\subseteq E'}$ by $\M|E'$.
	For a subset $F\subseteq E$, we use the shortcut $\M_F$ for $\M|(E\setminus F)$.
	An interdiction strategy consists of a subset $F\subseteq E$ containing $\ell$ elements.
	\begin{definition}[Set of $\ell$-most vital elements]
		Let $\lambda\in\I$.
		For a subset $F\subseteq E$, we denote a minimum weight basis on $\M_F$ at $\lambda$ by $B_\lambda^F$.
		If $\M_F$ does not have a basis of rank $k$, we set $w(B_\lambda^F,\lambda)=\infty$ for all $\lambda\in\I$.
		A subset $F^*\subseteq E$ with $\vert F^*\vert=\ell$ is called a \emph{set of $\ell$-most vital elements} at $\lambda$ if $w(B_\lambda^{F^*},\lambda) \geq w(B_\lambda^F,\lambda)$ for all $F\subseteq E$ with $\vert F\vert=\ell$.    
	\end{definition}
	The function mapping the parameters to the weight of an optimally $\ell$-interdicted basis $B_\lambda^{F^*}$ is called the optimal interdiction value function $y$.
	\begin{definition}[Optimal interdiction value function]
		For a subset $F\subseteq E$ with $\vert F\vert=\ell$, we define the function $y_F$ via $y_F\colon\I\to\mathbb{R}$, $\lambda\mapsto w(B_\lambda^F,\lambda)$ mapping the parameter vector $\lambda$ to the weight of a minimum weight basis of $\M_F$ at $\lambda$.
		For $\lambda\in\I$, we define $y(\lambda)\coloneqq\max\Set{y_F(\lambda)\colon\, F\subseteq E,\;\vert F\vert=\ell}$ as the weight of an optimally $\ell$-interdicted basis at $\lambda$.
		The \emph{optimal interdiction value function} $y$ is then defined via 
		$y\colon\I\to\mathbb{R}$, $\lambda\mapsto y(\lambda)$.
	\end{definition}
	The \emph{multi-parametric matroid $\ell$-interdiction problem} aims at obtaining an optimal interdiction strategy together with the weight of an optimally interdicted basis for each $\lambda\in\I$.
	More formally:
	\begin{problem}[Multi-parametric matroid $\ell$-interdiction problem]\label{prob:p_l_AX}
		Given a matroid~$\M$ with multi-parametric weights $w(e,\lambda)$, a parameter polytope~$\I\subseteq\R^p$ defined by $q$ half-spaces, and a number $\ell\in\mathbb{N}_{>0}$, the goal is to determine, for each $\lambda\in\I$, a set of $\ell$-most vital elements $F^*$ and the corresponding objective function value $y(\lambda) = y_{F^*}(\lambda)$.
	\end{problem}
	The problem obtained when fixing the parameter vector to $\lambda\in\I$ is called the \emph{non-parametric} (matroid $\ell$-interdiction) problem at $\lambda$.
	\begin{remark}
		In Problem~\ref{prob:p_l_AX}, the values $m,k,q$ and $\ell$ are considered to be part of the input of the problem, but the number $p$ of parameters is fixed.%
		\footnote{
			Fixing the number of parameters is common in multi-parametric approximation, cf.\ \cite{helfrich2022approximation}.
		}
		For a fixed parameter vector $\lambda \in\I$, the corresponding non-parametric matroid $\ell$-interdiction problem is $\mathsf{NP}$-hard.
		This follows from the $\mathsf{NP}$-hardness of the special case of the $\ell$-most vital edges problem with respect to graphic matroids, cf.\ \cite{frederickson1999increasing}.
		It is an open problem, whether Problem~\ref{prob:p_l_AX} is intractable, i.~e.\ if the number of interdiction strategies required to state an optimal interdiction strategy for each parameter vector is super-polynomial in $m,k$ or $\ell$.
		Furthermore, it generally seems to be more complicated to approximate a parametric interdiction problem such as Problem~\ref{prob:p_l_AX} than a common parametric problem, since the objective function $y$ is generally neither convex nor concave (\cite{hausbrandt2025multi}), which is a very useful property when computing parametric approximations, cf.\ \cite{bazgan2022approximation,helfrich2022approximation}.
		The goal of this article is to take a first step toward assessing the complexity of the problem by showing that we can at least compute approximations in polynomial time.
	\end{remark}
	We state our assumptions.
	\begin{assumption}\label{ass:p_l_AX} We assume the following.
		\begin{enumerate}
			\item There exists a basis $B_\lambda^F$ of cardinality $k$ for every $\lambda\in\I$ and $F\subseteq E$ with $\vert F\vert=\ell$.\label{ass:a_1_l_AX}
			\item No hyperplane $h(e,f)\in H^=$ is vertical, meaning that it contains a line parallel to the $p$-th coordinate axis.\label{ass:c_1_l_AX}
			\item For each $e\in E$, it holds $a_e,b_{1,e},\dotsc,b_{p,e}\geq 0$, where at most $k-1$ values $a_e$ and $k-1$ values $b_{i,p}$ (for all $i=1,\dotsc,p$) are equal to zero. \label{ass:d_1_l_AX}
			\item The parameter polytope~$\I$ is non-negative and compact. \label{ass:e_1_l_AX}
		\end{enumerate}
	\end{assumption}
	Assumption \ref{ass:a_1_l_AX} can be made without loss of generality, cf.\ \cite{hausbrandt2024theparametric}.
	It excludes the trivial case that no basis with rank $k$ exists after interdicting $\ell$ elements.
	Assumption \ref{ass:c_1_l_AX} is a technical assumption for the algorithm of \cite{edelsbrunner1986constructing} to compute the arrangement of the separating hyperplanes.
	The assumption can be ensured without loss of generality by common perturbation strategies (cf.\ \cite{edelsbrunner1987algorithms}).
	Once the cells of the arrangement $A(H^=)$ have been computed, the transformation can be shifted back.
	% Assumption \ref{ass:c_1_l_AX} can be ensured via perturbation and is essentially without loss of generality, cf.\ \cite{hausbrandt2025multi}.
	Assumptions \ref{ass:d_1_l_AX} and \ref{ass:e_1_l_AX} ensure that all functions $y_F$ and the objective function $y$ are positive on the entire parameter polytope $\I$, which is a standard assumption when considering approximation algorithms, cf.\ \cite{williamson2011design, bazgan2022approximation, helfrich2022approximation}.
	Note that each function value $y_F(\lambda)$ is the sum of exactly $k$ weights $w(e,\lambda)$.
	
	With Assumption~\ref{ass:p_l_AX}, we obtain a well-defined notion of approximation.
	The following two definitions are a direct extension of the concept of approximation for parametric optimization problems of \cite{bazgan2022approximation} and \cite{giudici2017approximation} to the parametric interdiction problem considered in this article.
	\begin{definition}
		Let $\lambda\in\I$ be fixed and $0<\alpha\leq1$.
		Let $F^*$ be a set of $\ell$-most vital elements at $\lambda$, i.~e.\ $y_{F^*}(\lambda)=y(\lambda)$.
		A subset $F\subseteq E$ with $\vert F\vert=\ell$ is called an \emph{$\alpha$-approximate set of $\ell$-most vital elements} at $\lambda$ if $y_F(\lambda)\geq\alpha\cdot y(\lambda)$.
		For an instance $\Pi=(\M,w,\ell,\I)$ of Problem~\ref{prob:p_l_AX}, a finite set $S$ of subsets $F\subseteq E$ containing $\ell$ elements is called an \emph{$\alpha$-approximation} if, for any $\lambda\in\I$, there exists an $\alpha$-approximate set $F\in S$ of $\ell$-most vital elements at $\lambda$.
	\end{definition}
	Note that an $\alpha$-approximation for Problem~\ref{prob:p_l_AX} is always finite by construction.
	\begin{definition}
		An algorithm $A$ that computes, for each instance of Problem~\ref{prob:p_l_AX}, an $\alpha$-approximation in time polynomial in the encoding length of the instance is called an \emph{$\alpha$-approximation algorithm} (for Problem~\ref{prob:p_l_AX}).
		A \emph{polynomial time approximation scheme (PTAS)} (for Problem~\ref{prob:p_l_AX}) is a family $(A_\varepsilon)_{\varepsilon>0}$ of algorithms such that, for every $0<\varepsilon<1$, algorithm $A_\varepsilon$ is a $(1-\varepsilon)$-approximation algorithm.
		A PTAS $(A_\varepsilon)_{\varepsilon>0}$ is called a \emph{fully polynomial time approximation scheme (FPTAS)} if the running time of each $A_\varepsilon$ is additionally polynomial in $\nicefrac{1}{\varepsilon}$.
	\end{definition}

\section{The approximation framework}\label{sec:framework_p_l}
	In this section, we construct a framework for an approximation algorithm for the multi-parametric matroid $\ell$-interdiction problem.
	One of the main hurdles to compute approximations for this problem is that the max-min structure of the problem implies that the objective function $y$ is in general neither convex nor concave, cf.\ \cite{hausbrandt2024theparametric, hausbrandt2025multi}.
	To overcome this obstacle, we show that the parameter polytope can be decomposed into polynomially many polytopes on each of which $y$ is given by a multi-parametric maximization problem.
	We consider each polytope separately and use the resulting convexity of the objective function $y$ to obtain a $(1-\varepsilon)\beta$-approximation for the multi-parametric matroid $\ell$-interdiction problem.
	
	In this section, we require the existence of the following two algorithms.
	First, we assume that there exists an oracle $\oracle_\beta$ with $0<\beta\leq1$ that computes a $\beta$-approximation for the corresponding non-parametric problem, see Section~\ref{sec:application_p_l} for concrete examples.
	Second, we require an algorithm $\alg_\varepsilon$ that computes a $(1-\varepsilon)\beta$-approximation for a common multi-parametric maximization problem on a parameter polytope.
	The algorithm $\alg_\varepsilon$ that uses the oracle $\oracle_\beta$ itself as a black box 
	constructed in Section~\ref{sec:approx_p_l}.
	
	Our procedure works as follows.
	We first compute the set $H^=$ of separating hyperplanes and add the hyperplanes defining the parameter polytope $\I$ to $H^=$.
	Afterwards, we compute the arrangement $A(H^=)$ of separating hyperplanes with the algorithm of \cite{edelsbrunner1986constructing}\footnote{For dimensions $d\geq3$, the proof of correctness originally presented in \cite{edelsbrunner1986constructing} contains an error that has been corrected by \cite{edelsbrunner1993zone}.}.
	We then use the decomposition of the parameter polytope into the cells of $A(H^=)$ and consider each cell $c\in A(H^=)$ separately.
	As shown in \cite{hausbrandt2025multi}, each cell $c\in A(H^=)$ can be assigned a unique basis that is optimal for all parameter vectors $\lambda\in c$.
	Similarly, we obtain the following lemma.
	\begin{lemma}\label{lem:y_convex_on_cell_p_l}
		On a cell $c\in A(H^=)$, each function $y_F$ is linear and, hence, their upper envelope is convex.
	\end{lemma}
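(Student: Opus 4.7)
The plan is to reduce the lemma to the non-interdicted structural result of \cite{hausbrandt2025multi} together with the observation that affine-function upper envelopes are convex. Fix a subset $F\subseteq E$ with $\vert F\vert=\ell$ and consider the restricted matroid $\M_F=\M|(E\setminus F)$. The separating hyperplanes that determine the sorting of elements of $E\setminus F$ form the subcollection $H^=_{E\setminus F}\subseteq H^=$, so every cell $c$ of $A(H^=)$ lies entirely inside a single cell of the coarser arrangement $A(H^=_{E\setminus F})$ associated with $\M_F$.

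Next, I would invoke the already-cited fact that each cell of an arrangement of separating hyperplanes of a multi-parametric matroid corresponds to a unique minimum weight basis valid for every parameter in that cell. Applying this to $\M_F$ (a basis of full rank $k$ exists by Assumption~\ref{ass:a_1_l_AX}\ref{ass:a_1_l_AX}) yields a single basis $B^F$ of $\M_F$ such that $B^F_\lambda=B^F$ for all $\lambda\in c$. Consequently, on $c$,
\[
y_F(\lambda)=w(B^F,\lambda)=\sum_{e\in B^F} w(e,\lambda)=\sum_{e\in B^F}\left(a_e+\sum_{i=1}^p \lambda_i\, b_{i,e}\right),
\]
which is an affine linear function of $\lambda$. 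Equivalently, the greedy algorithm run on $E\setminus F$ depends only on the strict weight order of its elements, and this order is constant on the (relatively open, full-dimensional) cell $c$, so greedy produces the same output at every $\lambda\in c$.

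Finally, since $y(\lambda)=\max\Set{y_F(\lambda)\colon F\subseteq E,\;\vert F\vert=\ell}$ is the pointwise maximum of finitely many affine (hence convex) functions on $c$, it is convex on $c$ as the upper envelope of a finite family of convex functions. I do not foresee a substantial obstacle: the only non-trivial ingredient is the earlier result on uniqueness of the optimal basis per cell, and once that is applied to $\M_F$ the rest is bookkeeping on the affine form of $w(\cdot,\lambda)$ plus the standard fact that maxima of convex functions are convex.
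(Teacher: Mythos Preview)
Your proof is correct and follows essentially the same approach as the paper. The paper argues directly that the sorting of the weights $w(e,\lambda)$ is constant on a cell $c$, so each $B_\lambda^F$ is fixed and $y_F$ is linear, with $y$ convex as a pointwise maximum of linear functions; your version packages the same observation as a refinement statement ($c$ lies in a single cell of the coarser arrangement $A(H^=_{E\setminus F})$) before invoking the basis-per-cell result, which is just a slightly more explicit route to the identical conclusion.
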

	\begin{proof}
		The sorting of the weights $w(e,\lambda)$ does not change on a cell $c$.
		Consequently, each basis $B_\lambda^F$ remains unchanged on $c$, in the sense that if $B=B_\lambda^F$ for one $\lambda$ from the relative interior of $c$, then $B=B_\lambda^F$ for all $\lambda\in c$.
		Hence, each function $y_F(\lambda)=w(B^F_\lambda,\lambda)$ is linear on $c$ and $y$ is convex as a point-wise maximum of linear functions.
	\end{proof}
	We now formulate our approximation framework for the multi-parametric matroid $\ell$-interdiction problem in Algorithm~\ref{alg:framework_p_l}.
	Asymptotic running time and correctness are discussed in Theorem~\ref{theo:framework_p_l}.
	
	\begin{algorithm}[t]
		\Input{An instance $\Pi=(\M,w,\ell,\I)$ of Problem~\ref{prob:p_l_AX} and $\varepsilon>0$.} 
		\Output{A $((1-\varepsilon)\beta)$-approximation for $\Pi$.}
		
		Compute the set of separating hyperplanes $H^=$\;
		Add all facet supporting hyperplanes of $\I$ to $H^=$\;
		Compute the arrangement of separating hyperplanes $A(H^=)$\;
		Initialize $S\algass\emptyset$\;
		\For{each cell $c\in A(H^=)$}{
			Compute a $((1-\varepsilon)\beta)$-approximation $S_c$ on $c$ with $ALG_\varepsilon$\;
			Set $S\algass S\cup S_c$\;
		}
		\Return $S$\;
		\caption{An approximation framework for Problem~\ref{prob:p_l_AX}}
		\label{alg:framework_p_l}
	\end{algorithm}
	
	\begin{theorem}\label{theo:framework_p_l}
		Let $0<\varepsilon<1$ and a $(1-\varepsilon)\beta$-approximation algorithm $\alg_\varepsilon$ 
		for a common multi-parametric maximization problem on a parameter polytope be given.
		For an instance $\Pi=(\M,w,\ell,\I)$ of the multi-parametric matroid $\ell$-interdiction problem, Algorithm~\ref{alg:framework_p_l} computes a $(1-\varepsilon)\beta$-approximation for $\Pi$ in $\O{{(m^2+q)}^{p}\cdot T_{\alg_\varepsilon}}$ time, where $T_{\alg_\varepsilon}$ denotes the running time of $\alg_\varepsilon$.
	\end{theorem}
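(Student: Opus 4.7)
The plan is to split the argument into a correctness half and a running-time half, both of which reduce \cref{alg:framework_p_l} to previously established ingredients. For correctness, fix an arbitrary $\lambda\in\I$. Because the algorithm adjoins the facet-supporting hyperplanes of $\I$ to $H^=$, each cell of $A(H^=)$ either lies entirely inside $\I$ or entirely outside, and the closures of the cells inside $\I$ cover $\I$; in particular $\lambda$ lies in the closure of some cell $\cell\subseteq\I$. On $\cell$, \cref{lem:y_convex_on_cell_p_l} guarantees that each $y_F$ is linear, so $y|_\cell=\max_F y_F|_\cell$ is the optimal value function of a common multi-parametric maximization problem whose per-$\lambda$ objective can be evaluated by invoking $\oracle_\beta$. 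Applying $\alg_\varepsilon$ on $\cell$ therefore yields a set $S_\cell$ that $(1-\varepsilon)\beta$-approximates $y$ throughout $\cell$. Since every $y_F$ is continuous on $\I$ and in particular linear on the closure of each cell, this bound extends from $\cell$ to its closure and in particular to $\lambda$. Taking $S=\bigcup_\cell S_\cell$ then gives the claimed global $(1-\varepsilon)\beta$-approximation.

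For the running time, the count of hyperplanes is elementary: there are $\binom{m}{2}=\O{m^2}$ separating hyperplanes $h(e,f)$, to which we add the $q$ facet hyperplanes of $\I$, giving $n=\O{m^2+q}$ hyperplanes in $\R^p$. A standard bound on hyperplane arrangements (see e.g.\ \cite{edelsbrunner1987algorithms,orlik2013arrangements}) implies that $A(H^=)$ contains $\O{n^p}=\O{(m^2+q)^p}$ cells, and the construction algorithm of \cite{edelsbrunner1986constructing} (with the zone-theorem correction of \cite{edelsbrunner1993zone}) produces the entire arrangement within the same asymptotic time. Iterating over the cells and invoking $\alg_\varepsilon$ exactly once per cell then yields the bound $\O{(m^2+q)^p\cdot T_{\alg_\varepsilon}}$, noting that the per-cell call to $\alg_\varepsilon$ dominates both the per-cell share of the arrangement construction and the computation of $H^=$ itself.

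The main obstacle, in my view, is not in either of the two paragraphs above (which are largely bookkeeping once \cref{lem:y_convex_on_cell_p_l} is in place), but in aligning the interface between \cref{alg:framework_p_l} and its assumed subroutine $\alg_\varepsilon$. The per-cell subproblem is a common multi-parametric maximization problem on an essentially arbitrary full-dimensional polytope (namely the closure of a cell of $A(H^=)$ clipped to $\I$), not on a canonical parameter domain such as a simplex or a box, so one must be certain that $\alg_\varepsilon$ actually accepts such input with the stated running time $T_{\alg_\varepsilon}$; this is precisely the generality that still has to be established in \cref{sec:approx_p_l}. A secondary subtlety is the treatment of parameters on the relative boundary of a cell, which is resolved exactly as above by continuity of each $y_F$, so that a uniform approximation factor on the open cells extends to a uniform factor on their closures and hence to all of $\I$.
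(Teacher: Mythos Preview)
Your proposal is correct and follows essentially the same route as the paper: invoke \cref{lem:y_convex_on_cell_p_l} to reduce the problem on each cell to a common multi-parametric maximization problem, apply $\alg_\varepsilon$ cell by cell, and use the Edelsbrunner arrangement bound $\O{(m^2+q)^p}$ for both the construction time and the cell count. If anything you are more careful than the paper, which does not explicitly treat the boundary-of-cell issue or the fact that adding the facet hyperplanes of $\I$ is what makes the cells inside $\I$ cover $\I$; your continuity argument and your remark about the required generality of $\alg_\varepsilon$ are welcome additions rather than deviations.
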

	\begin{proof}
		Let $c\in A(H^=)$ be a cell and $\lambda_0\in\inn(c)$ be a point from the relative interior of $c$.
		Lemma~\ref{lem:y_convex_on_cell_p_l} implies that, on $c$, Problem~\ref{prob:p_l_AX} is a common multi-parametric maximization problem with piece-wise linear, continuous and convex objective function of the form
		$$\max_{\substack{F\subseteq E, \vert F\vert=\ell \\ \lambda\in c}} y_F(\lambda)=w(B_{\lambda_0}^F,\lambda)$$
		as each function $y_F$ is linear on $c$.
		Consequently, we can apply the algorithm $\alg_\varepsilon$ on each of the polynomially many cells $c\in A(H^=)$ and obtain a $(1-\varepsilon)\beta$-approximation on the entire parameter polytope $\I$.
		
		We now analyze the asymptotic running time.
		The $\binom{m}{2}$ many separating hyperplanes in $H^=$ can be computed in $\O{m^2}$ time.
		The arrangement $A(H^=)$ of separating hyperplanes together with the $q$ facet supporting hyperplanes of $\I$ can be computed in $\O{{(m^{2}+q)}^p}$ time with the algorithm of \cite{edelsbrunner1986constructing}.
		The number of cells is bounded from above by $\O{{(m^{2}+q)}^p}$, cf.\ \cite{seipp2013adjacency}.
		Consequently, we obtain a total running time of $\O{m^2+{(m^2+q)}^{p}+{(m^2+q)}^p\cdot T_{\alg_\varepsilon}}=\O{{(m^2+q)}^{p}\cdot T_{\alg_\varepsilon}}.$
	\end{proof}
	We now focus on a single cell $c$ of the arrangement and construct the required $(1-\varepsilon)\beta$-approximation algorithm $\alg_\varepsilon$ given an oracle $\oracle_\beta$ for the non-parametric matroid $\ell$-interdiction problem.

\section{Approximation of a Single Cell}\label{sec:approx_p_l}
	In this section, we describe how to find an approximation set for a single cell $\cell$ from Algorithm~\ref{alg:framework_p_l}.
	We use several concepts from polyhedral theory, we refer to~\cite{ziegler1995LecturesPolytopes} for an in-depth introduction.
	As shown in Theorem~\ref{theo:framework_p_l}, this problem can be seen as the problem of approximating a common parametric maximization problem, with $\cell$ as its parameter set.
	However, common parametric approximation algorithms such as in~\cite{helfrich2022approximation} have rather strict requirements regarding the structure of the parameter set, and it is unclear how to transfer these algorithms into a setting where the parameter set can be a general polytope.
	Instead of using an existing approximation algorithm, we modify a \emph{Benson type} algorithm from (exact) multi-objective optimization~\cite{hamel.lohne.ea2014BensonTypeAlgorithms}, namely the dual Benson algorithm~\cite{ehrgott.lohne.ea2012DualVariantBenson} as described in~\cite{bokler.mutzel2015OutputSensitiveAlgorithmsEnumerating}.
	
	While our algorithm is tailored to Problem~\ref{prob:p_l_AX}, it is applicable to a general class\footnote{Similar to the problem class in \cite{helfrich2022approximation}, but with a more complex parameter set.} of parametric optimization problems.
	This includes both parametric minimization and maximization problems.
	
	\medskip
	
	Let $\lambda_0\in\inn(\cell)$ be an inner point of $c$.
	Note that the algorithm of \cite{edelsbrunner1986constructing} for the computation of the cells returns such an inner point for each cell.
	For every interdiction strategy $F$, we define the half-space
	\[
	\varphi(F)\coloneqq\Set{(\lambda,z)\in\R^{p+1}\colon w(B_{\lambda_0}^F,\lambda)\leq z}.
	\]
	For a $\lambda\in \cell$, the value of $w(B_{\lambda_0}^F,\lambda)$ provides a lower bound on the optimal objective value $y(\lambda)$, cf.\ Theorem~\ref{theo:framework_p_l}.
	The half-space $\varphi(F)$ can then be interpreted as a lower bound of the optimal solution value on the whole parameter set $\cell$, i.~e.~it must hold that $(\lambda,y(\lambda))\in \varphi(F)$ for every $\lambda\in\cell$.
	If we intersect all the half-spaces that correspond to an interdiction strategy and limit the parameter set to $\cell$, we get the polyhedron $P\coloneqq\{(\lambda,z):\lambda\in\cell, (\lambda,z)\in\bigcap_{F\subseteq E,\vert F\vert=\ell}\varphi(F)\}$.
	For every $\lambda\in\cell$, the point $(\lambda,y(\lambda))$ is equivalent to the point $(\lambda, \min_{(\lambda,z')\in P} z')$.
	% Similar, we can interpret an approximation set $S$ as a polyhedron
	
	\medskip
	
	The strategy of our algorithm is to maintain a set of solutions $S$ and, iteratively, to determine if $S$ is a $(1-\varepsilon)\beta$-approximation.
	In every iteration, we compute the vertices of the polyhedron $\appri=\{(\lambda,z):\lambda\in\cell, (\lambda,z)\in\bigcap_{F\in S}\varphi(F)\}$ and, for each vertex, determine the distance to the polyhedron $P$.
	Note that, by construction, $P\subseteq \appri$.
	
	As a shorthand, we denote the set of vertices of a polyhedron $\appri$ by $\ver(\appri)$.
	For every $(\lambda,z)\in \ver(\appri)$, we call $\oracle_\beta(\lambda)$, i.~e.\ the $\beta$-approximation oracle for the parameter value corresponding to this vertex.
	Let $F$ be the interdiction strategy returned, we then compare the value of $(1-\varepsilon)w(B_{\lambda_0}^{F},\lambda)$ to $z$.
	By construction of $\appri$, it holds that $z=\max _{F'\in S}w(B_{\lambda_0}^{F'},\lambda)$.
	Two cases are possible:
	First, if $(1-\varepsilon)w(B_{\lambda_0}^{F},\lambda)>z$, then $F$ is not $(1-\varepsilon)$-approximated by any strategy in $S$.
	Since $F$ itself is only a $\beta$-approximate solution, this means that the real approximation factor might be lower than  $(1-\varepsilon)\beta$.
	In this case, we stop the current iteration, add $F$ to $S$, recompute $\appri$, and start again with the vertices of the new  $\appri$.
	Conversely, in the second case $(1-\varepsilon)w(B_{\lambda_0}^{F},\lambda)\leq z$ holds, and it is ensured that $\lambda$ is already $(1-\varepsilon)\beta$-approximated in $S$.
	In this case, we do not add $F$ to $S$.
	If all vertices satisfy the second case, the convexity of $\appri$ ensures that $S$ is a $(1-\varepsilon)\beta$-approximation set (a formal proof of this fact is given in Theorem~\ref{theo:multi_approx_p_l}).
	
	\medskip
	
	As mentioned above, Algorithm~\ref{alg:approx_p_l} is a modification of the dual Benson algorithm as described in~\cite{bokler.mutzel2015OutputSensitiveAlgorithmsEnumerating}.
	The main difference is that we use an approximation oracle $\oracle_\beta$, and that we add new interdiction strategies to $S$ only if they are not already approximated by factor $(1-\varepsilon)$.
	While the original algorithm also keeps track of vertices that have already been checked to avoid redundant calls to $\oracle_\beta$, we omit this for an easier description.
	
	A formal description of the algorithm is given in Algorithm~\ref{alg:approx_p_l}.
	The first iteration is illustrated on a one-parametric example in Figure~\ref{figure::iteration}.
	
	In Theorem~\ref{theo:multi_approx_p_l}, we prove that Algorithm~\ref{alg:approx_p_l} computes a $(1-\varepsilon)\beta$-approximation set for $\cell$.
	Then, for the remainder of this section, we prove that Algorithm~\ref{alg:approx_p_l} runs in polynomial time.
	
	\begin{algorithm}[t]
		\LinesNumbered
		\Input{An instance $\Pi=(\M,w,\ell,\I)$ of Problem~\ref{prob:p_l_AX}, an $\varepsilon>0$, and a polytope $\cell\subseteq\I$.}
		\Output{A $((1-\varepsilon)\beta)$-approximation for $\Pi$ on $\cell$.}
		
		Choose an arbitrary point $\lambda_0\in\inn(\cell)$\;
		Compute a $\beta$-approximate set of $\ell$-most vital elements $F_0\algass \oracle_\beta(\lambda_0)$\;
		$S\algass \Set{F_0}$\;
		$\appri\algass\Set{(\lambda,z)\colon\lambda\in\cell,z\in\R}\cap\varphi(F_0)$\;
		\textbf{Iteration}\qquad\texttt{\textbackslash{}\textbackslash{} target for goto} \;
		\For{$(\lambda,z) \in \ver(\appri)$}{
			Compute a $\beta$-approximate set of $\ell$-most vital elements $F\algass \oracle_\beta(\lambda)$\;
			\If{$(1-\varepsilon)\cdot w(B_{\lambda_0}^{F},\lambda)>z$}
			{
				$S\algass S\cup \Set{F}$\;
				$\appri\algass \appri\cap\varphi(F)$\;
				\textbf{goto: Iteration}
			}
		}
		\Return $S$\;
		\caption{An approximation algorithm for a single cell}
		\label{alg:approx_p_l}
	\end{algorithm}
	
	\begin{theorem}\label{theo:multi_approx_p_l}
		\Cref{alg:approx_p_l} computes a $(1-\varepsilon)\beta$-approximation for the parameter set $c$.
	\end{theorem}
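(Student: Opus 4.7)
The plan is to assume that Algorithm~\ref{alg:approx_p_l} terminates (termination is handled separately in the remainder of the section) and to argue correctness by combining the vertex-wise termination condition with the convexity of $y$ on $\cell$ from Lemma~\ref{lem:y_convex_on_cell_p_l}. Let $S$ and $\appri$ denote the final solution set and polyhedron at termination. I would define $g\colon\cell\to\R$ by
$$g(\lambda):=\max_{F\in S} w(B_{\lambda_0}^F,\lambda),$$
which is piecewise linear and convex on $\cell$ as a pointwise maximum of affine functions; then $\appri=\Set{(\lambda,z)\in\R^{p+1}\colon \lambda\in\cell,\; z\geq g(\lambda)}$ and its lower boundary is exactly the graph $\Set{(\lambda,g(\lambda))\colon \lambda\in\cell}$. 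I would also note that every vertex of $\appri$ lies on this lower boundary, since the half-spaces defining $\cell$ involve only $\lambda$ and contribute at most $p$ linearly independent tight constraints, so at any vertex of $\appri$ at least one $\varphi(F)$-constraint must be tight.

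First, I would read off the approximation bound at every vertex of $\appri$ from the termination condition. For each $(\lambda_i,z_i)\in\ver(\appri)$, the conditional check fails at termination, so with $F_i:=\oracle_\beta(\lambda_i)$ we have $(1-\varepsilon)\cdot w(B_{\lambda_0}^{F_i},\lambda_i)\leq z_i$. Since on $\cell$ the basis $B_\lambda^{F_i}$ is independent of $\lambda$ (cf.\ the proof of Lemma~\ref{lem:y_convex_on_cell_p_l}), we have $w(B_{\lambda_0}^{F_i},\lambda_i)=y_{F_i}(\lambda_i)\geq \beta\cdot y(\lambda_i)$ by the defining guarantee of $\oracle_\beta$, and $z_i=g(\lambda_i)$ by the vertex observation above. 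Combining these yields
$$g(\lambda_i)\geq (1-\varepsilon)\beta\cdot y(\lambda_i)\quad\text{for every } (\lambda_i,z_i)\in\ver(\appri).$$

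The central step is then to lift this bound from vertices to all of $\cell$ by a convex combination argument. Fix $\lambda^*\in\cell$ and consider $q:=(\lambda^*,g(\lambda^*))$. Although $\appri$ is unbounded in the $+z$ direction, the minimal face of $\appri$ containing $q$ is bounded: some $\varphi(F^{**})$-constraint is tight at $q$, pinning $z$ on this face to an affine function of $\lambda$, while $\lambda$ itself is confined to the compact polytope $\cell$. Hence $q$ lies in the convex hull of vertices of $\appri$, say $q=\sum_i t_i(\lambda_i,z_i)$ with $t_i\geq 0$ and $\sum_i t_i=1$. Using $z_i=g(\lambda_i)$, the per-vertex bound, and convexity of $y$ from Lemma~\ref{lem:y_convex_on_cell_p_l}, I obtain
$$g(\lambda^*)=\sum_i t_i g(\lambda_i)\geq (1-\varepsilon)\beta\sum_i t_i y(\lambda_i)\geq (1-\varepsilon)\beta\cdot y(\lambda^*).$$
Since $g(\lambda^*)=\max_{F\in S} y_F(\lambda^*)$ on $\cell$, some $F\in S$ attains $y_F(\lambda^*)\geq (1-\varepsilon)\beta\cdot y(\lambda^*)$, which is exactly the claimed approximation property.

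The main obstacle I anticipate is the boundedness step: one has to make precise, using the compactness of $\cell$ and the affinity of the active lower-envelope constraint, that every point on the lower boundary of the unbounded polyhedron $\appri$ actually lies in a bounded face and therefore in the convex hull of genuine vertices of $\appri$ (rather than requiring recession rays). Once this geometric fact is in place, the rest is a straightforward combination of the loop's termination condition, the $\beta$-approximation guarantee of $\oracle_\beta$, and the convexity statement of Lemma~\ref{lem:y_convex_on_cell_p_l}.
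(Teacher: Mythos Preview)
Your proposal is correct and follows essentially the same route as the paper: both arguments establish the per-vertex bound from the failed if-condition together with the $\beta$-guarantee of $\oracle_\beta$, observe that every point on the lower boundary of $\appri$ lies in a bounded face (so is a convex combination of vertices), and then push the bound from vertices to all of $\cell$ via a convex combination. The only cosmetic difference is that the paper phrases the last step as a contradiction using the linearity of a single $w(B_{\lambda_0}^{F_C},\cdot)$, whereas you invoke the convexity of $y$ on $\cell$ from Lemma~\ref{lem:y_convex_on_cell_p_l} directly; these are equivalent since $y=\max_F y_F$ with each $y_F$ linear on $\cell$.
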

	\begin{proof}
		Let $S$ and $\appri$ be the sets from Algorithm~\ref{alg:approx_p_l} after termination.
		Note that the finiteness of the interdiction strategies always ensures termination.
		Since Algorithm~\ref{alg:approx_p_l} terminated, the if-condition in Line~8 must have returned false for every vertex of $\appri$.
		Therefore, for every vertex $(\lambda_v,z_v)$ of $\appri$, it holds that $z_v\geq (1-\varepsilon)\beta \cdot w(B_{\lambda_0}^{F},\lambda_v)$ for every interdiction strategy $F$.
		
		Furthermore, for any $(\lambda^*,z^*)\in \appri$ with $z^*=\min_{(\lambda,z)\in \appri} z$, it must hold that $(\lambda^*,z^*)$ is contained in a bounded face of $\appri$ and that there is an interdiction strategy $F^*\in S$ such that $z^*= w(B_{\lambda_0}^{F^*},\lambda^*)$ (the only unbounded faces of $\appri$ are the faces defined by the constraints that ensure $\lambda\in \cell$).
		
		For the sake of contradiction, assume that there is an interdiction strategy $F_C$ such that $z^* < (1-\varepsilon)\beta\cdot w(B_{\lambda_0}^{F_C},\lambda^*)$.
		Because $(\lambda^*,z^*)$ is in a bounded face of $\appri$, we can write it as a convex combination
		\[
		(\lambda^*,z^*) = \sum_{i=1}^q \mu_i\cdot(\lambda_i,z_i),
		\]
		where the points $(\lambda_i,z_i)$ for $i=1,\ldots,q$ are the vertices of this face and $\mu_i\in\mathbb{R}^q_\geq$ satisfies $\sum_{i=1}^q \mu_i=1$.
		Due to the linearity of the function $w(B_{\lambda_0}^{F^*},\lambda)$, we can also write
		\[
		w(B_{\lambda_0}^{F_C},\lambda^*)=\sum_{i=1}^q \mu_i\cdot w(B_{\lambda_0}^{F_C},\lambda_i).
		\]
		Thus, we get
		\[
		\sum_{i=1}^q \mu_i\cdot z_i < (1-\varepsilon)\beta \cdot\sum_{i=1}^q \mu_i\cdot w(B_{\lambda_0}^{F_C},\lambda_i).
		\]
		This implies that for at least one vertex $(\lambda_v,z_v)$ of $\appri$, it must hold that $z_v< (1-\varepsilon)\beta \cdot w(B_{\lambda_0}^{F_C},\lambda_v)$.
		This is a contradiction to $z_v\geq (1-\varepsilon)\beta \cdot w(B_{\lambda_0}^{F_C},\lambda_v)$.
	\end{proof}
	
	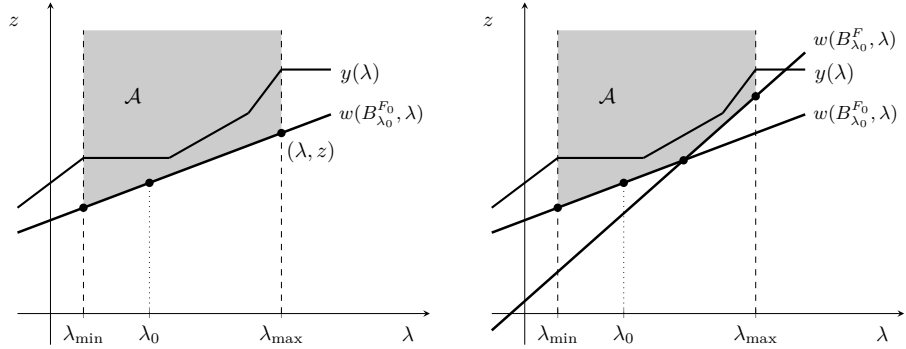
\begin{figure}
		\begin{minipage}[b]{0.48\textwidth}
			\centering
			\begin{tikzpicture}[scale=0.8,
				declare function={
					F0(\x)= (\x >=-1) * (1.5+0.2*\x);
				},
				]
				\begin{axis}[
					axis x line=middle, axis y line=middle,
					ymin=-0.5, ymax=5, ytick={},
					xmin=-1, xmax=11.5, xtick={},
					xtick={1,3,7},
					xticklabels={$\lambda_{\min}$,$\lambda_0$,$\lambda_{\max}$},
					x label style={at={(axis description cs:0.98,-0.01)}},
					xlabel=$\lambda$,
					ytick={-2},
					ylabel=$z$,
					y label style={at={(axis description cs:-0.04,0.98)}},
					domain=-1:8.5,
					]
					\addplot[fill=black!100,draw opacity=0,fill opacity=0.2] coordinates{
						(1,1.7)
						(7,2.9)
						(7,4.55)
						(1,4.55)
					};
					%y
					\addplot [line width = 1, smooth, domain=-1:1] {2.1+0.4*x} node[right] {};
					\addplot [line width = 1, smooth, domain=1:3.6] {2.5+0*x} node[right] {};
					\addplot [line width = 1, smooth, domain=3.6:6] {1.42+0.3*x} node[right] {};
					\addplot [line width = 1, smooth, domain=6:7] {-0.98+0.7*x} node[right] {};
					\addplot [line width = 1, smooth, domain=7:8.5] {3.92+0*x} node[right] {};
					\node at (9.4,3.85) (B){$y(\lambda)$};
					%F_0
					\addplot [very thick] {F0(x)} node [pos=1, right] {\small$w(B_{\lambda_0}^{F_0},\lambda)$};
					\addplot[mark=none, dashed] coordinates {(1, 0) (1, 4.55)};
					\addplot[mark=none, dashed] coordinates {(7, 0) (7, 4.55)};
					\addplot[mark=none, dotted] coordinates {(3, 0) (3, 2.1)};
					%\node[gray] at  (7.4,3.2) (B){$y$};
					\node[circle,fill,inner sep=1.4] at  (3,2.1) (B){};
					\node[circle,fill,inner sep=1.4] at  (1,1.7) (B){};
					\node[circle,fill,inner sep=1.4] at  (7,2.9) (B){};
					\node at  (7.9,2.6) (B){$(\lambda,z)$};
					\node at  (2.5,3.5) (B){$\appri$};
				\end{axis}
			\end{tikzpicture}
		\end{minipage}
		\hfill
		\begin{minipage}[b]{0.48\textwidth}
			\centering
			\begin{tikzpicture}[scale=0.8,
				declare function={
					F0(\x)= (\x >=-1) * (1.5+0.2*\x);
					F(\x)= (\x >=-1) * (0.2+0.47*\x);
				},
				]
				\begin{axis}[
					axis x line=middle, axis y line=middle,
					ymin=-0.5, ymax=5, ytick={},
					xmin=-1, xmax=11.5, xtick={},
					xtick={1,3,7},
					xticklabels={$\lambda_{\min}$,$\lambda_0$,$\lambda_{\max}$},
					x label style={at={(axis description cs:0.98,-0.01)}},
					xlabel=$\lambda$,
					ytick={-2},
					ylabel=$z$,
					y label style={at={(axis description cs:-0.04,0.98)}},
					domain=-1:8.5,
					]
					\addplot[fill=black!100,draw opacity=0,fill opacity=0.2] coordinates{
						(1,1.7)
						(4.814,2.462)
						(7,3.49)
						(7,4.55)
						(1,4.55)
					};
					%y
					\addplot [line width = 1, smooth, domain=-1:1] {2.1+0.4*x} node[right] {};
					\addplot [line width = 1, smooth, domain=1:3.6] {2.5+0*x} node[right] {};
					\addplot [line width = 1, smooth, domain=3.6:6] {1.42+0.3*x} node[right] {};
					\addplot [line width = 1, smooth, domain=6:7] {-0.98+0.7*x} node[right] {};
					\addplot [line width = 1, smooth, domain=7:8.5] {3.92+0*x} node[right] {};
					\node at (9.4,3.85) (B){$y(\lambda)$};
					%F_0, F
					\addplot [very thick] {F0(x)} node [pos=1, right] {\small$w(B_{\lambda_0}^{F_0},\lambda)$};
					\addplot [very thick] {F(x)} node [pos=1, right, yshift=6] {\small$w(B_{\lambda_0}^F,\lambda)$};
					
					\addplot[mark=none, dashed] coordinates {(1, 0) (1, 4.55)};
					\addplot[mark=none, dashed] coordinates {(7, 0) (7, 4.55)};
					\addplot[mark=none, dotted] coordinates {(3, 0) (3, 2.1)};
					
					\node[circle,fill,inner sep=1.4] at  (3,2.1) (B){};
					\node[circle,fill,inner sep=1.4] at  (1,1.7) (B){};
					\node[circle,fill,inner sep=1.4] at  (4.814,2.462) (B){};
					\node[circle,fill,inner sep=1.4] at  (7,3.49) (B){};
					\node at  (2.5,3.5) (B){$\appri$};
				\end{axis}
			\end{tikzpicture}
		\end{minipage}\\[-20pt]
		\begin{minipage}[t]{0.48\textwidth}
			\subcaption{The polyhedron $\appri$ after the initialization phase: The initial interdiction strategy $F_0$ was computed by $\oracle_\beta(\lambda_0)$ and  the function $w(B_{\lambda_0}^{F_0},\lambda)$ determines a lower bound on $y(\lambda)$ on the interval $c$.\label{figure::iteration::before}}
		\end{minipage}
		\hfill
		\begin{minipage}[t]{0.48\textwidth}
			\subcaption{The polyhedron $\appri$ after the first iteration: The strategy $F$ was computed by $\oracle_\beta(\lambda_{\max})$.
				Since $(1-\varepsilon)w(B_{\lambda_0}^{F},\lambda_{\max})>w(B_{\lambda_0}^{F_0},\lambda_{\max})$, the function $w(B_{\lambda_0}^{F},\lambda)$ is used to improve the lower bound on $y(\lambda)$. \label{figure::iteration::after}}
		\end{minipage}\\[-6pt]
		\caption{Illustration of the first iteration of Algorithm~\ref{alg:approx_p_l} on a one-parametric example.
			The cell $c$ is given as the interval $[\lambda_{\min},\lambda_{\max}]$.
			Note that, by Lemma~\ref{lem:y_convex_on_cell_p_l}, the optimal interdiction value function $y$ is convex on $c$.\label{figure::iteration}}
	\end{figure}
	
	We now analyze the running time complexity of Algorithm~\ref{alg:approx_p_l}.
	By construction, $\cell$ satisfies some useful geometric properties.
	It is a polytope that is the intersection of at most $m^2$ half-spaces.
	Furthermore, the encoding length of each of these half-spaces is bounded by a polynomial in the encoding length of the matroid $\mathcal{M}$.
	Similar, the encoding length of each half-space $\varphi(F)$ that is additionally constructed in Algorithm~\ref{alg:approx_p_l} is also bounded by such a polynomial.
	
	As a consequence, any vertex $(\lambda,z)$ of $P$ that is encountered in Algorithm~\ref{alg:approx_p_l} as well as $\lambda_0$ can be encoded with an encoding length polynomial in the encoding length of $\mathcal{M}$, cf.\ \cite{grotschel.lovasz.ea1988RationalPolyhedra}.
	Therefore, throughout Algorithm~\ref{alg:approx_p_l}, all arithmetic operations and calls to $\oracle_\beta$ run in polynomial time (cf.\ the analysis of the dual Benson algorithm in~\cite{bokler.mutzel2015OutputSensitiveAlgorithmsEnumerating}).
	By $T_{\oracle_\beta}$, we denote the polynomial running time bound for a call to $\oracle_\beta$ implied by this fact.
	
	\medskip
	
	There are exactly $\left|S\right|$ iterations (i.~e.\ times we jump from Line~11 to Line~5.) in Algorithm~\ref{alg:approx_p_l}, and the running time of each single iteration can be bounded further.
	Let $v=\left|S\right|+m^2$.
	Enumerating the vertices at the beginning of an iteration can be done in $\mathcal{O}(v\log v+ v^{\lfloor\nicefrac{p}{2} \rfloor})$ with the algorithm from~\cite{chazelle1993OptimalConvexHull}.
	There are at most $\mathcal{O}(v^{\lfloor\nicefrac{p}{2} \rfloor})$ vertices, and for each vertex there is at most one call to $\oracle_\beta$.
	Put together, we get a running time in
	\[
	\mathcal{O}\left(\left|S\right|\left( v\log v+ v^{\lfloor\nicefrac{p}{2}\rfloor}T_{\oracle_\beta}\right)\right).
	\]
	If we can show that $\left|S\right|$ is bounded by a polynomial in the encoding length of $\mathcal{M}$, then the entire running time of Algorithm~\ref{alg:approx_p_l} is also bounded by such a polynomial.
	
	For this, we need to define two additional notations:
	
	First, for an interdiction strategy $F$, we define the ($p+1$)-dimensional projection of the weight of the interdicted basis $B^F_{\lambda_0}$ as
	\[
	\gamma(F)\coloneqq \left(\sum_{e\in B_{\lambda_0}^F}b_{1,e},\ldots,\sum_{e\in B_{\lambda_0}^F}b_{p,e},\sum_{e\in B_{\lambda_0}^F}a_e,\right).
	\]
	In particular, 
	\begin{equation}\label{equation::inParticular}
		w(B_{\lambda_0}^F,\lambda)=\sum_{i=1}^p\lambda_i \gamma_i(F) + \gamma_{p+1}(F).
	\end{equation}
	
	Second, we define a \emph{$(1-\varepsilon)$-hyperrectangle} for a point $r\in \mathbb{R}^{p+1}_>$ as the set
	\[
	R(r)=\Set{x\in\mathbb{R}^{p+1}: r\geq x \geq (1-\varepsilon)r}.
	\]
	
	In~\cite{papadimitriou.yannakakis2000ApproximabilityTradeoffsOptimal}, it is shown how to subdivide the $\mathbb{R}_>^{p+1}$ into a polynomial number of $(1-\varepsilon)$-rectangles such that for every interdiction strategy $F\subseteq E$ with $\vert F\vert=\ell$ the corresponding point $\gamma(F)$ is in at least one $(1-\varepsilon)$-rectangle.
	The cardinality of this set of rectangles is bounded in $\mathcal{O}\left(\left(\nicefrac{\rho}{\varepsilon}\right)^{p+1}\right)$, where $\rho$ denotes an upper bound on the binary encoding length of the components that can appear in a vector $\gamma(F)$.
	Since every such component consists of a sum of weights, $\rho$ is bounded by a polynomial in the encoding length of the matroid interdiction problem.
	
	Additionally, for every $r\in \mathbb{R}^{p+1}_>$, we can show the following result:
	\begin{lemma}\label{lemma:grid_once}
		Let $r\in \mathbb{R}^{p+1}_>$.
		At the end of Algorithm~\ref{alg:approx_p_l}, there is at most one interdiction strategy $F\in S$ where the point $\gamma(F)$ is in $R(r)$.
	\end{lemma}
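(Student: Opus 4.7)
The plan is to argue by contradiction. Suppose towards a contradiction that two distinct interdiction strategies $F_1, F_2 \in S$ at termination satisfy $\gamma(F_1), \gamma(F_2) \in R(r)$. Without loss of generality, assume $F_1$ was added to $S$ strictly before $F_2$, and let $(\lambda_v, z_v) \in \ver(\appri)$ be the vertex in whose iteration $F_2$ was added. Note that the vertex enumeration is done over $\appri = \{(\lambda,z):\lambda \in \cell\} \cap \bigcap_{F\in S}\varphi(F)$, so $\lambda_v \in \cell$.

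The key observation is that membership of both $\gamma(F_1)$ and $\gamma(F_2)$ in the same $(1-\varepsilon)$-hyperrectangle makes their coefficient vectors componentwise comparable. Indeed, for every coordinate $i \in \{1,\ldots,p+1\}$ we have $\gamma_i(F_1) \geq (1-\varepsilon)\, r_i \geq (1-\varepsilon)\, \gamma_i(F_2)$. Since $\cell \subseteq \I$ and $\I$ is non-negative by Assumption~\ref{ass:e_1_l_AX}, the vector $\lambda_v$ has non-negative components. Plugging this componentwise inequality into the linear form in equation~\eqref{equation::inParticular} therefore yields
\[
w(B_{\lambda_0}^{F_1},\lambda_v) \;=\; \sum_{i=1}^p \lambda_{v,i}\, \gamma_i(F_1) + \gamma_{p+1}(F_1) \;\geq\; (1-\varepsilon)\Bigl(\sum_{i=1}^p \lambda_{v,i}\, \gamma_i(F_2) + \gamma_{p+1}(F_2)\Bigr) \;=\; (1-\varepsilon)\, w(B_{\lambda_0}^{F_2},\lambda_v).
\]

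I then combine this with two facts from the iteration in which $F_2$ was added. First, because $F_1$ was already in $S$ at that point, $\appri$ was contained in $\varphi(F_1)$, so $z_v \geq w(B_{\lambda_0}^{F_1},\lambda_v)$. Second, the if-condition in Line~8 of Algorithm~\ref{alg:approx_p_l} fired, giving $(1-\varepsilon)\, w(B_{\lambda_0}^{F_2},\lambda_v) > z_v$. Chaining these with the inequality from the previous paragraph produces
\[
(1-\varepsilon)\, w(B_{\lambda_0}^{F_2},\lambda_v) \;>\; z_v \;\geq\; w(B_{\lambda_0}^{F_1},\lambda_v) \;\geq\; (1-\varepsilon)\, w(B_{\lambda_0}^{F_2},\lambda_v),
\]
which is the desired contradiction. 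I expect the only delicate point to be making sure the non-negativity of $\lambda_v$ is invoked correctly so that the componentwise bound $\gamma(F_1) \geq (1-\varepsilon)\gamma(F_2)$ transfers to the linear form evaluated at $\lambda_v$; this hinges precisely on $\cell \subseteq \I \subseteq \R^p_{\geq 0}$.
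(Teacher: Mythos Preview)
Your proof is correct and follows essentially the same approach as the paper's: both set up the contradiction by ordering the two strategies, examine the vertex $(\lambda_v,z_v)$ at which the later one was added, combine $z_v\geq w(B_{\lambda_0}^{F_1},\lambda_v)$ with the triggered if-condition $(1-\varepsilon)w(B_{\lambda_0}^{F_2},\lambda_v)>z_v$, and then use $\lambda_v\geq 0$ together with membership in $R(r)$ to derive the contradiction. Your version is slightly more direct in that you compare $\gamma(F_1)$ and $(1-\varepsilon)\gamma(F_2)$ componentwise right away, whereas the paper routes both through the sandwich $(1-\varepsilon)r\leq\gamma(\cdot)\leq r$; this is a cosmetic difference only.
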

	\begin{proof}
		We prove by contradiction.
		Let $F_C\in S$ be a strategy that was added to $S$ later than $F$ during Algorithm~\ref{alg:approx_p_l}.
		Now, let $(\lambda,z)$ be the vertex of $\appri$ that caused $F_C$ being added to $S$, i.~e.\ $\oracle_\beta$ was called with $\lambda$, returned $F_C$, and $(1-\varepsilon)\cdot w(B_{\lambda_0}^{F_C},\lambda)>z$ (see Steps~$7$ to~$9$ in Algorithm~\ref{alg:approx_p_l}).
		At the same time, $w(B_{\lambda_0}^{F},\lambda)\leq z$ must hold because $F$ was already in $S$ at that point in Algorithm~\ref{alg:approx_p_l}.
		Therefore, it holds $w(B_{\lambda_0}^{F},\lambda)<(1-\varepsilon)\cdot w(B_{\lambda_0}^{F_C},\lambda)$.
		
		\medskip
		
		By construction of $R$, the point $\gamma(F)$ being in $R$ implies that $0\leq(1-\varepsilon)r\leq\gamma(F)\leq r$.
		Since $c\subseteq\mathbb{R}_\geq^p$ and, thus, $\lambda\geq0$, this implies (together with Equation~\ref{equation::inParticular}) that
		\[
		(1-\varepsilon)\left(\sum_{i=1}^{p}\lambda_i r_{i} + r_{p+1} \right) \leq w(B_{\lambda_0}^{F},\lambda)\leq \sum_{i=1}^{p}\lambda_i r_{i} + r_{p+1}.
		\]
		For the sake of contradiction, let us now assume that the point $\gamma(F_C)$ is also in $R$.
		For the same reason as above, it holds that
		\[
		(1-\varepsilon)\left(\sum_{i=1}^{p}\lambda_i r_{i} + r_{p+1} \right) \leq w(B_{\lambda_0}^{F_C},\lambda)\leq \sum_{i=1}^{p}\lambda_i r_{i} +  r_{p+1}.
		\]
		At the same time, $w(B_{\lambda_0}^{F},\lambda)<(1-\varepsilon) w(B_{\lambda_0}^{F_C},\lambda)$ holds, which implies 
		\begin{alignat*}{2}
			&& (1-\varepsilon)\left(\sum_{i=1}^{p}\lambda_i r_{i} + r_{p+1} \right) & < (1-\varepsilon) w(B_{\lambda_0}^{F_C},\lambda) \\
			\implies\quad&& \sum_{i=1}^{p}\lambda_i r_{i} + r_{p+1} & <  w(B_{\lambda_0}^{F_C},\lambda).
		\end{alignat*}
		This is a contradiction.
	\end{proof}
	
	By combining Lemma~\ref{lemma:grid_once} with the cardinality result from \cite{papadimitriou.yannakakis2000ApproximabilityTradeoffsOptimal}, it follows that there can be at most $\mathcal{O}\left(\left(\nicefrac{\rho}{\varepsilon}\right)^{p+1}\right)$ solutions in $S$.
	Therefore, the polynomial running time of Algorithm~\ref{alg:approx_p_l} follows directly.
	\begin{theorem}
		For fixed $p$, the running time of Algorithm~\ref{alg:approx_p_l} is bounded by a polynomial in $\varepsilon^{-1}$, the encoding length of the matroid interdiction problem instance $\Pi$, and the running time of the oracle $\oracle_\beta$.
	\end{theorem}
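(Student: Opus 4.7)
The plan is that this theorem is mostly a bookkeeping synthesis: all the hard work has been done in the preceding per-iteration analysis and in Lemma~\ref{lemma:grid_once}, and what remains is to multiply the pieces together. First, I would reuse the per-iteration running time derived in the paragraph just above: setting $v \coloneqq |S| + m^2$, enumerating the vertices of $\appri$ via the Chazelle convex-hull algorithm and invoking $\oracle_\beta$ at most once per vertex yields a single iteration time of
\[
\mathcal{O}\!\left(v\log v + v^{\lfloor p/2\rfloor}\, T_{\oracle_\beta}\right).
\]
Since each iteration adds a strictly new interdiction strategy to $S$, the total number of iterations equals $|S|$, so the overall running time is bounded by $\mathcal{O}\!\left(|S|\bigl(v\log v + v^{\lfloor p/2\rfloor}T_{\oracle_\beta}\bigr)\right)$.

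Next I would bound $|S|$ by combining Lemma~\ref{lemma:grid_once} with the grid construction from \cite{papadimitriou.yannakakis2000ApproximabilityTradeoffsOptimal}. That construction produces a family of $(1-\varepsilon)$-hyperrectangles of cardinality $\mathcal{O}\!\left((\rho/\varepsilon)^{p+1}\right)$ whose union covers every achievable vector $\gamma(F) \in \mathbb{R}^{p+1}_{>}$, where $\rho$ is an upper bound on the binary encoding length of a coordinate of $\gamma(F)$. Since each $\gamma_i(F)$ is a sum of at most $k$ rational weights drawn from $\Pi$, the quantity $\rho$ is polynomial in the encoding length of the instance. By Lemma~\ref{lemma:grid_once}, each such hyperrectangle contains the projection $\gamma(F)$ of at most one element of $S$, so
\[
|S| \in \mathcal{O}\!\left((\rho/\varepsilon)^{p+1}\right),
\]
which is polynomial in $\varepsilon^{-1}$ and in the encoding length of $\Pi$ for fixed $p$.

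Substituting this bound on $|S|$ into the total running time estimate then gives a polynomial in $\varepsilon^{-1}$, the encoding length of $\Pi$, and $T_{\oracle_\beta}$, as required. I would also briefly note that, as argued earlier, every half-space involved in $\cell$ and every $\varphi(F)$ has polynomial encoding length, so all arithmetic on vertices of $\appri$ and all oracle calls respect the polynomial bound $T_{\oracle_\beta}$ used above; this legitimates the use of $T_{\oracle_\beta}$ uniformly throughout the analysis.

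The only potential obstacle is making sure the covering result of \cite{papadimitriou.yannakakis2000ApproximabilityTradeoffsOptimal} is applied in the correct dimension and to the correct ambient region, namely $\mathbb{R}^{p+1}_{>}$ restricted to coordinates that can actually appear as $\gamma(F)$; Assumption~\ref{ass:d_1_l_AX} guarantees positivity of these coordinates, so the hyperrectangle scheme is well defined. Beyond that, the proof is essentially a one-line substitution and requires no additional technical machinery.
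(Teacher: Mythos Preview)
Your proposal is correct and mirrors the paper's argument essentially line for line: the paper also bounds the per-iteration cost via Chazelle's vertex enumeration and one oracle call per vertex, counts $|S|$ iterations, and then bounds $|S|$ by combining Lemma~\ref{lemma:grid_once} with the $(1-\varepsilon)$-hyperrectangle covering of \cite{papadimitriou.yannakakis2000ApproximabilityTradeoffsOptimal} to get $|S|\in\mathcal{O}((\rho/\varepsilon)^{p+1})$. Your additional remark that Assumption~\ref{ass:p_l_AX}\ref{ass:d_1_l_AX} ensures $\gamma(F)\in\mathbb{R}^{p+1}_{>}$ so that the hyperrectangle scheme applies is a welcome clarification the paper leaves implicit.
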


\section{Applications}\label{sec:application_p_l}
	Given an oracle $\oracle_\beta$ for the non-parametric matroid $\ell$-interdiction problem, the approximation framework from Section~\ref{sec:framework_p_l} together with the $(1-\varepsilon)\beta$-approximation from Section~\ref{sec:approx_p_l} yields the first (polynomial time) $(1-\varepsilon)\beta$-approximation algorithm for the multi-parametric matroid $\ell$-interdiction problem.
	In this section, we apply this approximation algorithm to two prominent types of matroids:
	For \emph{partition} matroids we obtain the first FPTAS and for \emph{graphic} matroids we obtain the first $(1-\varepsilon)\frac{1}{4}$-approximation.
	
	\textbf{Partition matroids.}
	If $\M$ is a partition matroid, we are given a natural number $r\in\N_{>0}$ and a partition $E=E_1\dot\cup\ldots\dot\cup E_r$ of the matroid's ground set $E$.
	Further, there exists a natural number $u_i\in\N_{>0}$ for each $i=1,\dotsc,r$.
	The independent sets are given by $\F=\Set{R\subseteq E\colon \vert R\cap E_i\vert\leq u_i\; \forall i=1,\dotsc,r}$.
	The bases are those subsets that share exactly $u_i$ elements with the set $E_i$ for each $i=1,\dotsc,r$.
	
	For fixed $\lambda\in\I$, the partition matroid $\ell$-interdiction problem can be solved exactly in polynomial time $\O{m^3\ell}$ using dynamic programming, cf.\ \cite{weninger2024interdiction}.
	We use their algorithm as oracle $\oracle_\beta$ with $\beta=1$ to obtain an FPTAS for the multi-parametric partition matroid $\ell$-interdiction problem.
	\begin{corollary}
		Let $0<\varepsilon<1$.
		Algorithm~\ref{alg:framework_p_l} together with Algorithm~\ref{alg:approx_p_l} yields an FPTAS for the multi-parametric partition matroid $\ell$-interdiction problem.
	\end{corollary}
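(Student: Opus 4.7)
The plan is to show this corollary by straightforward assembly of the existing building blocks, using the dynamic program of Weninger et al.\ as the oracle $\oracle_\beta$ with $\beta=1$. First I would verify that this oracle fits the interface required by Algorithms~\ref{alg:framework_p_l} and~\ref{alg:approx_p_l}: at any fixed $\lambda\in\I$, every weight $w(e,\lambda)=a_e+\sum_{i=1}^p\lambda_i b_{i,e}$ is a rational number computable in $\O{p}$ time, so the $\O{m^3\ell}$ dynamic program solves the non-parametric partition matroid $\ell$-interdiction problem exactly at $\lambda$. Hence it realises an oracle $\oracle_1$ with $T_{\oracle_1}$ polynomial in the encoding length of $\Pi$.

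Next I would invoke the running-time theorem from Section~\ref{sec:approx_p_l}: for $\beta=1$, Algorithm~\ref{alg:approx_p_l} produces a $(1-\varepsilon)$-approximation on any single cell $\cell$ in time polynomial in $\varepsilon^{-1}$, in the encoding length of $\Pi$, and in $T_{\oracle_1}$, so in particular polynomial in $\varepsilon^{-1}$ and the input size. Correctness of the approximation factor is immediate from Theorem~\ref{theo:multi_approx_p_l}.

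I would then plug this $\alg_\varepsilon$ into Theorem~\ref{theo:framework_p_l}. The framework invokes $\alg_\varepsilon$ on each of the $\O{(m^2+q)^p}$ cells of $A(H^=)$; since $p$ is fixed by Assumption~\ref{ass:p_l_AX}, this multiplier is polynomial in the input size, and the final set $S$ is a $(1-\varepsilon)\cdot 1=(1-\varepsilon)$-approximation on all of $\I$. Hence the total running time is polynomial in the input size and in $\varepsilon^{-1}$.

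Combining these observations gives, for every $0<\varepsilon<1$, a $(1-\varepsilon)$-approximation algorithm $A_\varepsilon$ whose running time is polynomial both in the encoding length of $\Pi$ and in $\nicefrac{1}{\varepsilon}$. By definition the family $(A_\varepsilon)_{0<\varepsilon<1}$ is an FPTAS. There is no substantive obstacle here: everything reduces to checking that $\beta=1$ is admissible and that $T_{\oracle_1}$ and the cell count $\O{(m^2+q)^p}$ are polynomial in the input (which they are, for fixed $p$); the real work has already been done in Sections~\ref{sec:framework_p_l} and~\ref{sec:approx_p_l}.
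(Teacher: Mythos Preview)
Your proposal is correct and follows exactly the approach the paper takes: plug the exact $\O{m^3\ell}$ dynamic program of Weninger et al.\ in as the oracle $\oracle_\beta$ with $\beta=1$, then invoke Theorem~\ref{theo:framework_p_l} together with the polynomial running-time bound of Algorithm~\ref{alg:approx_p_l} to obtain a $(1-\varepsilon)$-approximation in time polynomial in the input size and $\nicefrac{1}{\varepsilon}$. The paper states this as an immediate corollary without a formal proof block, so your write-up is simply a more explicit version of the same argument.
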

	
	\textbf{Graphic matroids.}
	If $\M$ is a graphic matroid, we obtain the multi-parametric minimum spanning tree $\ell$-interdiction problem.
	Let $G=(V,E)$ be an undirected graph with edge set $E$ and node set $V$.
	By Assumption \ref{ass:a_1_l_AX}, we can assume that $G$ is $(\ell+1)$-edge-connected, i.~e. $G$ remains connected after the removal of $\ell$ arbitrary edges.
	The set $\F$ of independent sets is given by the acyclic subsets of $E$, i.~e by the trees.
	The bases correspond to the spanning trees of $G$ and a set of $\ell$-most vital elements corresponds to a set of $\ell$-most vital edges.

	Each edge is associated with a multi-parametric weight
	$$w(e,\lambda)\coloneqq w(e,\lambda_1,\dotsc,\lambda_p)= a_e + \lambda_1 b_{1,e} +\dotsc+\lambda_p b_{p,e},$$ where $a_e, b_{1,e},\dotsc, b_{p,e}\in \mathbb{Q}$.
	In the multi-parametric minimum spanning tree $\ell$-interdiction problem, the goal is to compute, for each possible parameter vector, a set of $\ell$-most vital edges and the weight of an optimally $\ell$-interdicted spanning tree $y(\lambda)$.
	For fixed $\ell\in\N_{>0}$, the problem can be solved in polynomial time, cf.\ \cite{hausbrandt2024theparametric}.
	If $\ell$ is part of the input of the problem, the problem becomes $\mathsf{NP}$-hard for a fixed parameter vector $\lambda \in\I$, cf.\ \cite{hausbrandt2024theparametric}.
	
	For a fixed $\lambda\in\I$, \cite{zenklusen2015approximation} developed a $\nicefrac{1}{14}$-approximation for the minimum spanning tree interdiction problem.
	\cite{linhares2017improved} improved upon the approximation guarantee to obtain a $\nicefrac{1}{4}$-approximation.
	Using their $\nicefrac{1}{4}$-approximation algorithm as oracle $\oracle_\beta$ with $\beta=\nicefrac{1}{4}$ for our $(1-\varepsilon)\beta$-approximation algorithm, we obtain the following corollary.
	\begin{corollary}
		Let $0<\varepsilon<1$.
		Algorithm~\ref{alg:framework_p_l} together with Algorithm~\ref{alg:approx_p_l} yields a $(1-\varepsilon)\frac{1}{4}$-approximation algorithm for the multi-parametric minimum spanning tree $\ell$-interdiction problem.
	\end{corollary}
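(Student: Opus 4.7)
The plan is to verify the corollary by checking that the multi-parametric minimum spanning tree $\ell$-interdiction problem fits into the framework of Algorithm~\ref{alg:framework_p_l}, and then to assemble the running-time and approximation-guarantee statements from the earlier sections. Since the graphic matroid is a matroid and edges correspond to ground set elements, an instance of the multi-parametric minimum spanning tree $\ell$-interdiction problem is a special case of Problem~\ref{prob:p_l_AX}, and Assumption~\ref{ass:p_l_AX} translates into standard mild conditions on the input graph (e.g.\ $(\ell+1)$-edge-connectivity ensures part~\ref{ass:a_1_l_AX}).

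Next, I would identify the oracle $\oracle_\beta$ with $\beta=\nicefrac{1}{4}$. For a fixed parameter vector $\lambda\in\I$, \cite{linhares2017improved} provide a polynomial time $\nicefrac{1}{4}$-approximation for the (non-parametric) minimum spanning tree interdiction problem; since evaluating the multi-parametric weight $w(e,\lambda)$ at a fixed $\lambda$ only requires a constant number of arithmetic operations per edge, their algorithm immediately yields an oracle $\oracle_{\nicefrac{1}{4}}$ that runs in polynomial time on any fixed parameter vector.

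With this oracle in hand, the main step is to plug $\oracle_{\nicefrac{1}{4}}$ into the approximation algorithm $\alg_\varepsilon$ from Section~\ref{sec:approx_p_l}. By Theorem~\ref{theo:multi_approx_p_l}, $\alg_\varepsilon$ computes a $(1-\varepsilon)\cdot\nicefrac{1}{4}$-approximation on each cell $c\in A(H^=)$, and by the final theorem of Section~\ref{sec:approx_p_l}, its running time is polynomial in $\varepsilon^{-1}$, in the encoding length of the instance, and in $T_{\oracle_{\nicefrac{1}{4}}}$, which is itself polynomial. Finally, Theorem~\ref{theo:framework_p_l} lifts this single-cell guarantee to a $(1-\varepsilon)\cdot\nicefrac{1}{4}$-approximation on the entire parameter polytope $\I$ in time $\mathcal{O}\bigl((m^2+q)^p\cdot T_{\alg_\varepsilon}\bigr)$, which for fixed $p$ is polynomial in the input size and in $\varepsilon^{-1}$. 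Combining all three observations gives the claimed $(1-\varepsilon)\frac{1}{4}$-approximation algorithm.

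There is no real obstacle here: the corollary is essentially a bookkeeping exercise once the framework is in place. The only minor thing to double-check is that the Linhares--Swamy algorithm genuinely satisfies the interface of $\oracle_\beta$ as used in Algorithms~\ref{alg:framework_p_l} and~\ref{alg:approx_p_l}, namely that for any queried $\lambda$ it returns a subset $F\subseteq E$ with $|F|=\ell$ such that $y_F(\lambda)\geq\nicefrac{1}{4}\cdot y(\lambda)$; this is exactly the guarantee proved in \cite{linhares2017improved}, so nothing further is needed.
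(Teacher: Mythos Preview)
Your proposal is correct and follows exactly the approach the paper takes: the paper does not give a separate proof of this corollary but simply notes that the $\nicefrac{1}{4}$-approximation of \cite{linhares2017improved} serves as the oracle $\oracle_\beta$ with $\beta=\nicefrac{1}{4}$, whence the result follows from Theorems~\ref{theo:framework_p_l} and~\ref{theo:multi_approx_p_l} together with the polynomial running-time bound of Algorithm~\ref{alg:approx_p_l}. Your write-up is in fact more detailed than what the paper provides, but the logic is identical.
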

	
\section{Conclusion}
	In this article, we investigated a matroid interdiction problem, where the interdiction strategies correspond to the subsets of the matroid's ground set of a given cardinality.
	An optimal interdiction strategy maximizes the weight of a minimum weight basis when applied.
	We considered a multi-parametric version of this $\mathsf{NP}$-hard problem, in which each element is assigned a weight that depends linearly on a parameter vector of fixed size.
	The goal is to compute an optimal interdiction strategy and the weight of an optimal interdicted minimum weight basis for each possible parameter vector from a given parameter polytope.
	We showed that the problem can be approximated in polynomial time whenever there exists an approximation algorithm for the corresponding non-parametric problem, i.~e.\ the problem obtained when fixing the parameter vector.
	We developed an algorithm with approximation guarantee arbitrary close to that of an algorithm for the non-parametric problem.
	We obtained an FPTAS for the multi-parametric partition matroid interdiction problem and a $(1-\varepsilon)\frac{1}{4}$-approximation for the multi-parametric minimum spanning tree interdiction problem.
	As a byproduct, we developed the first approximation algorithm for a broad class of common multi-parametric optimization problems that works on an arbitrary parameter polytope.	
	
	\section*{Acknowledgments}
	This work was partially supported by the project "Ageing Smart -- Räume intelligent gestalten" funded by the Carl Zeiss Foundation and the DFG grant RU 1524/8-1, project number 508981269 and GRK 2982, 516090167 ``Mathematics of Interdisciplinary Multiobjective Optimization''.
	
	% \bibliography{bib_matroid}
	\bibliography{bib_matroid}
	
\end{document}